\newtheorem{theorem}{Theorem}[section]
\newtheorem{cor}[theorem]{Corollary}
\newtheorem{lemma}[theorem]{Lemma}
\newtheorem{prop}[theorem]{Proposition}
\theoremstyle{definition}
\newtheorem{rem}[theorem]{Remark}
\numberwithin{equation}{section}
\newcommand{\rad}{{\rm rad}}
\newcommand{\NN}{{\mathcal N}}
\newcommand{\PP}{{\mathcal P}}
\newcommand{\e}{{\rm e}}
\newcommand{\D}{{\mathcal D}}
\renewcommand{\S}{{\mathcal S}}
\newcommand{\Z}{{\mathbb Z}}
\begin{document}

\title{Product-free sets with high density}

\author[P. Kurlberg]{P\"ar Kurlberg}
\address{Department of Mathematics\\
KTH\\
SE-10044, Stockholm, Sweden}
\email{kurlberg@math.kth.se}

\author[J. C. Lagarias] {Jeffrey C.  Lagarias}
\address{Department of Mathematics\\
University of Michigan\\
Ann Arbor, MI 48109, USA}
\email{lagarias@umich.edu}

\author[C. Pomerance]{Carl Pomerance}
\address{Mathematics Department\\
Dartmouth College\\
Hanover, NH 03755, USA}
\email{carl.pomerance@dartmouth.edu}

\date{December 9, 2011}

\begin{abstract}
We show that there are sets of integers with asymptotic density arbitrarily close to 1
in which there is no solution to the equation $ab=c$, with $a,b,c$ in the set.
We also consider some natural generalizations, as well as a specific numerical
example of a product-free set of integers with asymptotic density greater than $1/2$.
\end{abstract}

\subjclass[2010]{Primary 11B05, 11B75}

\keywords{product-free, asymptotic density}
\dedicatory
{Dedicated to Professor Andrzej Schinzel on his 75th birthday}

\maketitle

%
%
\section{Introduction}

We say  a set of integers $\S$ is {\em product-free} if whenever $a,b,c\in \S$ we have
$ab\ne c$.  Similarly, if $\S\subset\Z/n\Z$, we say $\S$ is product-free if
$ab\not\equiv c\pmod n$, whenever $a,b,c\in\S$.  Clearly, if $\S$ is a product-free
subset of $\Z/n\Z$, then the set of integers congruent modulo~$n$ to some member of $\S$
is a product-free set of integers.
For a positive integer $n$, let $D(n)$ denote the maximum 
value of  $|\S|/n$ where $\S$ 
runs over all  product-free
subsets of $\Z/n\Z$. (Here $|\S|$ denotes the cardinality
of a set $\S$.)

In a recent paper, the third author and Schinzel~\cite{PS}  obtained
an upper bound on $D(n)$ valid for a large set of $n$.
They showed that $D(n)<1/2$ whenever $n$ is not divisible by a square
with at least 6 distinct prime factors.  Further, those numbers
which are divisible by a square with at least 6 distinct prime
factors form a set of asymptotic density about $1.56\times10^{-8}$.
Originally they suspected that $D(n) <1/2$ might hold for all $n$.

In this paper we show that for each real number $\epsilon>0$ there is
some number $n$ with $D(n)>1-\epsilon$.  Thus, there are product-free
sets of integers with asymptotic density arbitrarily close to~1.
Stated this way, the result is best possible, since no
product-free set can have density $1$.  Indeed, if $\S$ is a
product free set of positive integers and $a$ is the least member of
$\S$, then it is easy to see that the upper density of $\S$ is at most
$1-1/(2a)$; see Remark~\ref{rem:upper}.

A consequence of our main result is that  the set of integers $n$ having $D(n) > 1-\epsilon$
has a positive lower density.
This follows using the  property that  $D(mn)\ge D(n)$
for all positive integers $m,n$. If  $D(n_0)>1-\epsilon$, 
then it shows that $D(n)>1-\epsilon$ holds for every multiple of $n_0$, and so it holds
for a set of positive integers $n$ of positive lower density.  Furthermore
the set $\NN(u)=\{ n \ge 1:  D(n) > u\}$ has a well-defined
logarithmic density $\delta(u)$ which is positive for $0 <u<1$.
In Theorem \ref{thm:main} we obtain a quantitative rate at which $D(n)$ approaches $1$,
which yields a lower bound for $\delta(u)$ as $u \to 1^-$, given as (\ref{eq:logbd}) in Sec. \ref{sec5}.

We also compute a numerical example of a number $n$ with $D(n)>1/2$
and we consider some generalizations of the equation $ab=c$.

It is interesting to note that while 
there are product-free subsets with density arbitrarily close to~1,
the density of {\em sum-free} subsets of finite abelian groups (written
additively) is easily seen to be bounded by $1/2$ (see ~\cite{GR05} for
a complete characterization of the maximum density of sum-free subsets
of various types of finite abelian groups).

%
%
\section{The main theorem}

In this section we show that there can be product-free sets of
integers of density arbitrarily close to one, but not equal to one.
Our main result is as follows.
\begin{theorem}
\label{thm:main}
There is a positive constant $C$ and infinitely many integers $n$ with
$$
 D(n)>1-\frac{C}{(\log\log n)^{1-\frac12\e\log2}}.
$$
\end{theorem}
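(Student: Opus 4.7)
The strategy begins with a sharpening of the unit-bound: if $\S\subset\Z/n\Z$ is product-free and $a\in\S$, then multiplication by $a$ has kernel of size $\gcd(a,n)$, so the image $a\S$ has at least $|\S|/\gcd(a,n)$ elements; combined with $a\S\cap\S=\emptyset$ (otherwise $a$, some $x\in\S$, and $ax$ would all lie in $\S$), this yields $|\S|\le n\cdot\gcd(a,n)/(\gcd(a,n)+1)$. Hence for $|\S|/n\ge1-\epsilon$, every $a\in\S$ must satisfy $\gcd(a,n)\gtrsim1/\epsilon$. This motivates taking $n=\prod_{p\le y}p$ (or a close variant), since then $\log\gcd(a,n)$ is, for uniformly random $a$, a sum of independent random variables with mean $\sum_{p\le y}(\log p)/p\sim\log y$ by Mertens. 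Hence most residues already have $\gcd(a,n)$ comparable to $y$, and the constraint $\gcd(a,n)\ge d^*$ for a threshold $d^*$ of shape $(\log y)^{1-e\log 2/2}$ is satisfied by all but a negligible fraction of $\Z/n\Z$. Since $\log\log n\sim\log y$ with this choice of $n$, the rate $\epsilon\asymp(\log\log n)^{-(1-e\log 2/2)}$ is consistent with this trivial ceiling.

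With such $n$ in hand, one would like to take $\S$ to be essentially the set $\S_0=\{a:\gcd(a,n)\ge d^*\}$; however this set is closed under multiplication, since for squarefree $n$ we have $\gcd(ab\bmod n,n)=\mathrm{lcm}(\gcd(a,n),\gcd(b,n))\ge d^*$, so $\S_0$ is far from product-free. The actual construction must therefore remove enough elements to destroy all unwanted products while retaining density $1-O(1/d^*)$. A natural device is to partition residues by their profile $T(a)=\{p\mid n:p\mid a\}$, so that multiplication translates to union of profiles, and to select $\S$ as a union of carefully chosen profile classes $\S_T$ (with certain ``diagonal'' elements further removed from each $\S_T$); equivalently, one views $\S$ as the pre-image in $\Z/n\Z$ of a union-free family $\mathcal F\subseteq 2^{\{p\le y\}}$ of profiles, intersected with the large-$\gcd$ condition.

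The main obstacle is to show that such a selection can simultaneously (i) be genuinely product-free in $\Z/n\Z$, which requires $\mathcal F$ to avoid unions not only for distinct pairs $T_1,T_2\in\mathcal F$ but also, via within-class thinning, for the diagonal $T(a)\cup T(a)=T(a)$; and (ii) retain density $\ge1-O((\log\log n)^{-(1-e\log 2/2)})$, approaching the ceiling $1-1/d^*$ imposed by the sharpened unit bound. The specific exponent $1-\tfrac12 e\log 2$ should arise from optimising the threshold $d^*$ against the combinatorial loss incurred by restricting to $\mathcal F$, with the constants $e$ and $\log 2$ entering through Mertens-type estimates for the conditional distribution of $\omega(\gcd(a,n))$ given $\gcd(a,n)\ge d^*$, in a computation analogous to the Erd\H os--Ford--Tenenbaum analysis of divisor statistics. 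I expect the bulk of the work to lie in engineering $\mathcal F$ explicitly and extracting the precise exponent from the resulting density calculation, and checking that the profile family so produced is dense enough in the right sense yet union-free under the operation $(T_1,T_2)\mapsto T_1\cup T_2$.
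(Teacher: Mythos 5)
Your opening observation (the bound $|\S|\le n\,\gcd(a,n)/(\gcd(a,n)+1)$ for $a\in\S$) is correct and a reasonable heuristic, but the construction you build on it has a fatal structural flaw: you take $n$ squarefree. For squarefree $n$ the invariant $\gcd(a,n)$ behaves like a set under multiplication ($T(ab)=T(a)\cup T(b)$), and, as you yourself note, the diagonal $T\cup T=T$ forces you to thin each profile class $\S_T$ internally. But each class $\S_T$ is, under the Chinese remainder theorem, $\{0\}\times(\Z/n_{T^c}\Z)^*$ with $n_{T^c}=\prod_{p\mid n,\,p\notin T}p$; this is closed under multiplication and is essentially a finite abelian group, so a product-free subset of it has density at most $1/2$. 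Summing over profiles, no choice of $\mathcal F$ and no thinning can push $|\S|/n$ past $1/2$ for squarefree $n$ --- consistent with the Pomerance--Schinzel theorem quoted in the introduction, which gives $D(n)<1/2$ whenever $n$ is not divisible by a square with at least $6$ distinct prime factors. So the union-free-family device to which you defer ``the bulk of the work'' cannot be engineered: your framework is capped at density $1/2$ and cannot reach $1-\epsilon$, let alone the stated rate.

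The missing idea is to make $n$ highly powerful, so that multiplication of gcds is genuinely multiplicative rather than idempotent. The paper takes $n_x=\ell_x^2$ with $\ell_x$ the least common multiple of $[1,x]$, and lets $\D_x$ be the set of divisors $d$ of $\ell_x$ (hence of $n_x/\rad(n_x)$) with $\frac{\e}{4}\log\log x<\Omega(d)<\frac{\e}{2}\log\log x$. Every product $d_1d_2$ of two such divisors still divides $n_x$ and has $\Omega(d_1d_2)=\Omega(d_1)+\Omega(d_2)$ exceeding the upper cutoff, so $\D_x$ is product-free --- including your troublesome ``diagonal'' case $d_1=d_2$ --- and Lemma~\ref{lem:divisors} transfers this to $\S_{\D_x}=\{a:\gcd(a,n_x)\in\D_x\}$ with $|\S_{\D_x}|=\varphi(n_x)\sum_{d\in\D_x}1/d$. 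The density computation is then a Hardy--Ramanujan concentration estimate for $\Omega(d)$ weighted by $1/d$ (Lemma~\ref{lem:average} and Corollary~\ref{cor:hr}), and the exponent $1-\frac12\e\log2$ arises as $1-Q(\e/4)=1-Q(\e/2)$ with $Q(t)=t\log t-t+1$, from balancing the two tails of that distribution --- not from optimizing a gcd threshold $d^*$ as you propose.
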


Here the exponent $1- \frac{1}{2} \e \log 2 \approx 0.057915$.


\begin{cor}
\label{cor:main}
For each real number $\epsilon>0$ there is a positive integer $n$ with 
$ D(n)>1-\epsilon$.
\end{cor}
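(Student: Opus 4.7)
The plan is to deduce the corollary immediately from Theorem \ref{thm:main}, which is the only nontrivial ingredient. The theorem provides a constant $C>0$ and an infinite sequence of integers $n$ along which
\[
D(n) > 1 - \frac{C}{(\log\log n)^{\alpha}},
\qquad \alpha := 1 - \tfrac{1}{2}\e\log 2.
\]
The remark right after the theorem statement records that $\alpha \approx 0.057915 > 0$. This positivity is the only feature of $\alpha$ that the corollary uses.

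Given $\epsilon > 0$, I would choose a threshold $N_0 = N_0(\epsilon)$ large enough that $C/(\log\log N)^{\alpha} < \epsilon$ for every $N \ge N_0$; since $\alpha > 0$, the quantity $(\log\log N)^{\alpha}$ tends to $\infty$ with $N$, so such an $N_0$ exists (explicitly one may take any $N_0$ with $\log\log N_0 > (C/\epsilon)^{1/\alpha}$). Because Theorem \ref{thm:main} guarantees infinitely many $n$ satisfying the displayed inequality, at least one such $n$ exceeds $N_0$. For that $n$,
\[
D(n) > 1 - \frac{C}{(\log\log n)^{\alpha}} > 1 - \epsilon,
\]
which is the desired conclusion.

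There is no real obstacle here beyond Theorem \ref{thm:main} itself; the corollary is a direct qualitative consequence of the quantitative bound. The only point that must be checked, and which the paper flags explicitly, is that the exponent $\alpha = 1 - \tfrac{1}{2}\e\log 2$ appearing in the denominator is strictly positive, for otherwise the bound in Theorem \ref{thm:main} would not force $D(n) \to 1$ along the guaranteed sequence. Granting that, the corollary follows in one line.
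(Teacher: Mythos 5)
Your proposal is correct and matches the paper's intent: the corollary is stated as an immediate consequence of Theorem \ref{thm:main}, and your deduction (positivity of the exponent $1-\tfrac12\e\log2$ forces the error term to $0$ along the infinite sequence of $n$) is exactly the one-line argument the paper has in mind.
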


We first sketch  the idea of the proof.
Let $\Omega(m)$ denote the number of prime factors of $m$
counted with multiplicity.  Clearly for any fixed $z$,
the set of numbers $m$ with $z<\Omega(m)<2z$ is
product-free.  Further, after Hardy and Ramanujan,
we know that $\Omega(m)$ for numbers $m\le x$ is usually
concentrated near $\log\log x$.  So if $z\approx\frac23\log\log x$
(actually $\frac\e4$ works out a little better than $\frac23$),
we have a product-free set that has the great preponderance
of integers in $[1,x]$.  With an extra device (see Lemma~\ref{lem:divisors})
for creating such a set that is periodic modulo some particular large
number $n$, we obtain the result.  The idea used bears some
resemblance to that of Remark~2 and its proof in Hajdu, Schinzel, and Skalba~\cite{HSS}.

Before giving the proof, we establish some preliminary lemmas.
Let $\varphi$ denote Euler's function 
and let $\rad(n)$ denote the largest squarefree divisor of the positive integer $n$.


\begin{lemma}
\label{lem:divisors}
Suppose that $n$ is a positive integer and $\D$ is a product-free set of divisors of 
$n/\rad(n)$.  Then
$$
\S_\D:=\{s\in\Z/n\Z: \gcd(s,n)\in\D\}
$$
is product-free and
$$
|\S_\D|=\varphi(n)\sum_{d\in\D}\frac1d.
$$
\end{lemma}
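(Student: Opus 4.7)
The plan is to prove the two claims in turn, both resting on the same elementary observation: a divisor $d$ of $n/\rad(n)$ is precisely one for which $v_p(d)<v_p(n)$ at every prime $p\mid n$, so $n$ and $n/d$ share the same set of prime divisors.

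For the cardinality I would partition $\S_\D$ according to the value $d=\gcd(s,n)\in\D$. For any fixed divisor $d$ of $n$, the number of residues $s\pmod n$ with $\gcd(s,n)=d$ equals $\varphi(n/d)$. When $d\mid n/\rad(n)$ the primes dividing $n/d$ coincide with those dividing $n$, so the formula $\varphi(m)=m\prod_{p\mid m}(1-1/p)$ yields $\varphi(n/d)=\varphi(n)/d$. Summing over $d\in\D$ then gives $|\S_\D|=\varphi(n)\sum_{d\in\D}1/d$.

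For product-freeness I would argue by contradiction. Suppose $a,b,c\in\S_\D$ satisfy $ab\equiv c\pmod n$, and let $d_i\in\D$ denote the respective gcds with $n$. Since $ab\equiv c\pmod n$ implies $\gcd(ab,n)=\gcd(c,n)=d_3$, it suffices to show $d_3=d_1 d_2$, contradicting the product-freeness of $\D$. I would verify this one prime at a time. Fix $p\mid n$ and set $e=v_p(n)$, $\alpha=v_p(d_1)$, $\beta=v_p(d_2)$. Because $d_1,d_2\mid n/\rad(n)$ we have $\alpha,\beta<e$, and then $\gcd(a,n)=d_1$ forces $v_p(a)=\alpha$ exactly, since $v_p(a)\ge e$ would give $v_p(d_1)=e$; similarly $v_p(b)=\beta$. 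Thus $v_p(\gcd(ab,n))=\min(\alpha+\beta,e)$, and the assumption $d_3\mid n/\rad(n)$ forces this minimum strictly below $e$, so $\alpha+\beta<e$ and $v_p(d_3)=\alpha+\beta=v_p(d_1 d_2)$. Since $p$ was arbitrary, $d_3=d_1 d_2$.

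The only point that needs care is resisting the false intuition that $\gcd(ab,n)=\gcd(a,n)\gcd(b,n)$ holds in general; the hypothesis that elements of $\D$ divide $n/\rad(n)$ is exactly what secures the strict inequalities $v_p(d_i)<v_p(n)$ needed to make this multiplicativity valid for $a,b\in\S_\D$.
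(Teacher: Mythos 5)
Your proof is correct and follows essentially the same route as the paper: the cardinality claim via the partition of $\S_\D$ by $\gcd(s,n)=d$ and the identity $\varphi(n/d)=\varphi(n)/d$, and product-freeness via the observation that $\gcd(ab,n)=\gcd(d_1d_2,n)$ must equal $d_1d_2$ whenever it divides $n/\rad(n)$. Your prime-by-prime valuation check simply makes explicit the two facts the paper asserts without detail, so there is nothing to add.
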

\begin{proof}
Suppose $s_1,s_2\in\S_\D$ with $\gcd(s_i,n)=d_i\in\D$ for $i=1,2$.  We have
$\gcd(s_1s_2,n)=\gcd(d_1d_2,n)=d_3$, say.  If $d_3\nmid n/\rad(n)$, then
by hypothesis $d_3\not\in\D$, so $s_1s_2\not\in\S_\D$.  On the other hand, 
if $d_3\mid n/\rad(n)$, then $d_3=d_1d_2$, so again by hypothesis, $d_3\not\in\D$
and $s_1s_2\not\in\S_\D$.
Thus, $\S_\D$ is product-free and it remains to compute its cardinality.  For $d\in\D$,
we have
$$
\{s\in\Z/n\Z:\gcd(s,n)=d\}=\{jd:j\in\Z/(n/d)\Z,~\gcd(j,n/d)=1\}.
$$
Thus, $|\S_\D|=\sum_{d\in\D}\varphi(n/d)$.  
But, by hypothesis, we have $\rad(n/d)=\rad(n)$ for $d\in\D$, so
that $\varphi(n/d)=\varphi(n)/d$.  This completes the proof.
\end{proof}

For an integer $n>1$, let $P(n)$ denote the largest prime factor of $n$
and let $P(1)=1$.  As above, we let $\Omega(n)$ denote the number of prime factors 
of $n$, counted with multiplicity.  We use the notation $f(x)\asymp g(x)$
if there are positive constants $c_1,c_2$ such that $c_1g(x)\le f(x)\le c_2(x)$
in some stated domain for the variable $x$.
Lemma~\ref{lem:average} and Corollary~\ref{cor:hr} below are standard results,
cf.\ Exercises 04 and 05 in~\cite{HT}; we give the details for completeness.

\begin{lemma}
\label{lem:average}
Uniformly for real numbers $x,z$ with $x\ge2$ and $0<z<2$, 
$$
\sum_{P(n)\le x}\frac{z^{\Omega(n)}}{n}
\asymp\frac1{2-z}(\log x)^z.
$$ 
\end{lemma}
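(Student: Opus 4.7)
The plan is to recognize the sum as a restricted Euler product and then isolate the pole at $z=2$. Since $\Omega$ is completely additive and the condition $P(n)\le x$ is multiplicative,
$$\sum_{P(n)\le x}\frac{z^{\Omega(n)}}{n}=\prod_{p\le x}\sum_{k\ge 0}\left(\frac{z}{p}\right)^k=\prod_{p\le x}\frac{1}{1-z/p},$$
which converges because $0<z<2$ forces $z/p<1$ for every prime $p$.

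The only factor that can blow up as $z\to 2^-$ is the one at $p=2$, which equals $\frac{2}{2-z}$ and already accounts for the $\frac{1}{2-z}$ in the statement. The remaining task is to show
$$\prod_{3\le p\le x}\frac{1}{1-z/p}\asymp(\log x)^z$$
uniformly for $x\ge 2$ and $0<z<2$. I would take logarithms and invoke the expansion $-\log(1-y)=y+O(y^2)$, whose implied constant is absolute for $y\in[0,2/3]$; this applies because $z/p\le 2/3$ whenever $p\ge 3$. Summing over $3\le p\le x$ yields $z\sum_{3\le p\le x}1/p$ with an error bounded by $\sum_{p\ge 3}1/p^2=O(1)$, uniformly in $z$.

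Mertens' theorem then gives $\sum_{3\le p\le x}1/p=\log\log x+O(1)$ for $x\ge 3$, so the logarithm of the product is $z\log\log x+O(1)$ uniformly in $z\in(0,2)$, and exponentiating produces the claimed $\asymp(\log x)^z$. The tiny range $2\le x<3$ is handled separately: the product over $p\ge 3$ is empty, $(\log x)^z$ is pinched between absolute positive constants, and the whole sum equals $\frac{2}{2-z}$, which is $\asymp\frac{1}{2-z}(\log x)^z$. The only delicate point is obtaining uniformity in $z$ of the Taylor expansion of $-\log(1-z/p)$, which is precisely why it pays to extract the $p=2$ factor at the outset; once that is done, the rest is bookkeeping with Mertens.
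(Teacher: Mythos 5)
Your proof is correct, and it follows the same overall strategy as the paper (recognize the Euler product, then use Mertens and the observation that the singularity at $z=2$ lives entirely in the prime $2$), but the bookkeeping is genuinely different. You peel off the $p=2$ Euler factor $\frac{2}{2-z}$ at the outset and then estimate $\log\prod_{3\le p\le x}(1-z/p)^{-1}$ via the expansion $-\log(1-y)=y+O(y^2)$ (uniform since $z/p\le 2/3$ for $p\ge 3$) together with Mertens' second theorem $\sum_{p\le x}1/p=\log\log x+O(1)$. The paper instead writes the full product as $\prod_{p\le x}(1-1/p)^{-z}\cdot\prod_{p\le x}(1-1/p)^{z}(1-z/p)^{-1}$, gets $(\log x)^z$ from the first factor by Mertens' product theorem raised to the power $z$, and shows the correction factor has logarithm $-\log(2-z)+O(1)$, where again only the $p=2$ term is responsible for the blow-up. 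Your route makes the origin of the $\frac{1}{2-z}$ factor completely transparent and needs only the additive form of Mertens, at the cost of a separate (trivial) check for $2\le x<3$ where the product over $p\ge 3$ is empty; the paper's route handles all $x\ge 2$ in one stroke and exhibits $(\log x)^z$ directly as a power of the Mertens product. Both arguments deliver the required uniformity in $z\in(0,2)$ and $x\ge 2$, so there is nothing to fix.
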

\begin{proof}
We have
\begin{align*}
\sum_{P(n)\le z}\frac{z^{\Omega(n)}}{n}&=\prod_{p\le x}\left(1+\frac zp+\frac{z^2}{p^2}+\cdots\right)
=\prod_{p\le x}\left(1-\frac zp\right)^{-1}\\
&=\prod_{p\le x}\left(1-\frac1p\right)^{-z}
\prod_{p\le x}\left(1-\frac1p\right)^z\left(1-\frac zp\right)^{-1}.
\end{align*}
By the theorem of Mertens we have $\prod_{p\le x}(1-1/p)^{-z}\sim\e^{\gamma z}(\log x)^z$
uniformly for $z$ in the interval $(0,2)$, as $x\to\infty$, where $\gamma$ is
the Euler--Mascheroni constant.  Thus, it suffices to prove that the second product
above is of magnitude $1/(2-z)$.  Using the power series for $\log(1-t)$, 
we have
\begin{multline*}
\log\left(\prod_{p\le x}\left(1-\frac1p\right)^z\left(1-\frac zp\right)^{-1}\right)=
 \sum_{p\le x}\left(z\log\left(1-\frac1p\right)-\log\left(1-\frac zp\right)\right)\\
=z\log\frac12-\log\left(1-\frac z2\right)+O\left(\sum_{3\le p\le x}\frac1{p^2}\right)
=-\log(2-z)+O(1).
\end{multline*}
This then completes the proof of the lemma.
\end{proof}

We will use the entropy-like function $Q(x)$ defined for $x>0$ by
$$
Q(x)=x\log x-x+1.
$$
Note that $Q(x)\ge0$ for all $x>0$ with equality only at $x=1$.

\begin{cor}
\label{cor:hr}
Uniformly for real numbers $\alpha,\beta,x$ with $0<\alpha\le 1\le\beta<2$ and $x\ge 3$,
we have
$$
\sum_{\substack{P(n)\le x\\\Omega(n)\le\alpha\log\log x}}\frac1n\ll(\log x)^{1-Q(\alpha)},
\quad
\sum_{\substack{P(n)\le x\\\Omega(n)\ge\beta\log\log x}}\frac1n\ll\frac1{2-\beta}
(\log x)^{1-Q(\beta)}.
$$
\end{cor}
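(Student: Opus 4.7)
The plan is to apply the standard Rankin trick: introduce a parameter $z>0$ and weight the sum by $z^{\Omega(n)}$, which converts the tail condition on $\Omega(n)$ into a factor of $(\log x)^{\pm\alpha\log z}$ while leaving Lemma~\ref{lem:average} directly applicable. The auxiliary parameter $z$ is then optimized to minimize the resulting exponent of $\log x$.

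For the first inequality I would fix any $z\in(0,1]$. The condition $\Omega(n)\le\alpha\log\log x$ combined with $z\le1$ forces $z^{\Omega(n)}\ge z^{\alpha\log\log x}=(\log x)^{\alpha\log z}$, so Lemma~\ref{lem:average} gives
\[
\sum_{\substack{P(n)\le x\\\Omega(n)\le\alpha\log\log x}}\frac{1}{n}
\le(\log x)^{-\alpha\log z}\sum_{P(n)\le x}\frac{z^{\Omega(n)}}{n}
\ll\frac{1}{2-z}(\log x)^{z-\alpha\log z}.
\]
The next step is to minimize $f(z):=z-\alpha\log z$ on $(0,1]$. Since $f'(z)=1-\alpha/z$ vanishes at $z=\alpha\in(0,1]$ and $f''>0$, the minimum is $f(\alpha)=\alpha-\alpha\log\alpha=1-Q(\alpha)$. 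Because $\alpha\le1$ the prefactor $1/(2-\alpha)$ is absorbed into the implied constant, yielding the claimed bound.

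For the second inequality I would argue symmetrically with $z\in[1,2)$. Now $\Omega(n)\ge\beta\log\log x$ and $z\ge1$ give $z^{\Omega(n)}\ge(\log x)^{\beta\log z}$, so the analogous computation produces
\[
\sum_{\substack{P(n)\le x\\\Omega(n)\ge\beta\log\log x}}\frac{1}{n}
\ll\frac{1}{2-z}(\log x)^{z-\beta\log z}.
\]
Minimizing $z-\beta\log z$ over $[1,2)$ again yields $z=\beta$ with exponent $1-Q(\beta)$, but this time the factor $1/(2-\beta)$ must be kept explicit because $\beta$ may be close to $2$.

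I do not anticipate any substantive obstacle: the whole corollary reduces to one application of Lemma~\ref{lem:average} together with the elementary identity $z-\alpha\log z|_{z=\alpha}=1-Q(\alpha)$. The only mild point requiring care is uniformity in $\alpha$ and $\beta$, which is inherited from the uniformity already built into Lemma~\ref{lem:average}, once one observes that the optimal choices of $z$ always lie in its admissible range $(0,2)$.
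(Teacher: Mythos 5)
Your proof is correct and is essentially the paper's argument: both use the Rankin trick of weighting by $z^{\Omega(n)}$ and invoking Lemma~\ref{lem:average}, the only cosmetic difference being that the paper plugs in $z=\alpha$ (resp.\ $z=\beta$) directly while you derive these as the optimal choices. The handling of the $1/(2-z)$ factor (absorbed for $\alpha\le1$, kept explicit for $\beta$ near $2$) also matches the paper.
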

\begin{proof}
We have 
\begin{align*}
\sum_{\substack{P(n)\le x\\\Omega(n)\le\alpha\log\log x}}\frac1n&\le
\sum_{P(n)\le x}\frac{\alpha^{\Omega(n)-\alpha\log\log x}}{n}\\
&=\sum_{P(n)\le x}\frac{\alpha^{\Omega(n)}}{n}(\log x)^{-\alpha\log\alpha}
\ll (\log x)^{\alpha-\alpha\log\alpha},
\end{align*}
using $0<\alpha\le1$ and
Lemma~\ref{lem:average} with $z=\alpha$.  Similarly, Lemma~\ref{lem:average} with $z=\beta$ gives
$$
\sum_{\substack{P(n)\le x\\\Omega(n)\ge\beta\log\log x}}\frac1n\le
\sum_{P(n)\le x}\frac{\beta^{\Omega(n)-\beta\log\log x}}{n}
\ll \frac1{2-\beta}(\log x)^{\beta-\beta\log\beta}.
$$
This completes the proof of the corollary.
\end{proof}


\medskip\noindent
{\em Proof of Theorem} \ref{thm:main}.
Let $x$ be a large real number, let $\ell_x$ denote the least common multiple of
the integers in $[1,x]$, and let $n_x=\ell_x^2$.
Thus, by the prime number theorem, we have
$n_x=\e^{(2+o(1))x}$ as $x\to\infty$, so that
\begin{equation}
\label{eq:n_x}
\log\log n_x=\log x+O(1).
\end{equation}
Let
$$
\D_x=\left\{d\mid \ell_x\,:\,\frac \e4\log\log x<\Omega(d)<\frac \e2\log\log x\right\}.
$$
We note that each $d\in\D_x$ divides $n_x/\rad(n_x)$ and that
$\D_x$ is product-free.  Thus, by Lemma~\ref{lem:divisors} 
we find that
$$
\S_{\D_x} := \{ a \in \Z/n_x \Z:  \gcd(a, n_x)  \in \D_x\}
$$
is a product-free subset of $\Z/n_x\Z$, with density $\D(\S) = \frac{\varphi(n_x)}{n_x}\sum_{d \in \D_x} \frac{1}{d}.$
Using \eqref{eq:n_x} it
suffices to show that for some positive constant $c$ and $x$ sufficiently large,
\begin{equation}
\label{eq:toshow}
\frac{\varphi(n_x)}{n_x}\sum_{d\in\D_x}\frac1d\ge1
-\frac{c}{(\log x)^{1-\frac12\e\log2}}.
\end{equation}

We have
$$
\sum_{d\in\D_x}\frac1d\ge\sum_{d\mid\ell_x}\frac1d-
\sum_{\substack{P(d)\le x\\\Omega(d)\le\frac\e4\log\log x}}\frac1d
-\sum_{\substack{P(d)\le x\\\Omega(d)\ge\frac\e2\log\log x}}\frac1d.
$$
Since $1-Q(\frac\e4)=1-Q(\frac\e2)=\frac12\e\log2$, Corollary~\ref{cor:hr}
implies there is some absolute constant $c'>0$ with
$$
\sum_{d\in\D_x}\frac1d\ge\sum_{d\mid\ell_x}\frac1d-c'(\log x)^{\frac12\e\log2}.
$$
Now, letting $\sigma$ denote the sum-of-divisors function, 
\begin{align*}
\sum_{d\mid\ell_x}\frac1d&=\frac{\sigma(\ell_x)}{\ell_x}
=\prod_{p^a\|\ell_x}\frac{p^{a+1}-1}{p^a(p-1)}
=\prod_{p\le x}\frac p{p-1}\prod_{p^a\|\ell_x}\left(1-\frac1{p^{a+1}}\right)\\
&\ge\prod_{p\le x}\frac p{p-1}\cdot\left(1-\frac1x\right)^{\pi(x)}
\ge\prod_{p\le x}\frac p{p-1}\cdot\left(1-\frac{\pi(x)}{x}\right),
\end{align*}
where $\pi(x)$ denotes the prime-counting function.
Thus, since $\varphi(n_x)/n_x=\prod_{p\le x}(p-1)/p$,
$$
\frac{\varphi(n_x)}{n_x}\sum_{d\in\D_x}\frac1d
\ge 1-\frac{\pi(x)}{x}-c'(\log x)^{\frac12\e\log2}\prod_{p\le x}\frac{p-1}p.
$$
Using the theorem of Mertens for the product and the Chebyshev estimate
$\pi(x)\ll x/\log x$, we obtain \eqref{eq:toshow}, completing the
proof of Theorem~\ref{thm:main}.
\hfill\qedsymbol

\begin{rem}
\label{rem:stronger}
It is possible to uniformly save a factor $\sqrt{\log\log x}$ in Corollary~\ref{cor:hr}
under the strengthened hypothesis that $\alpha\in[\epsilon,1-\epsilon]$ and
$\beta\in[1+\epsilon,2-\epsilon]$, where $\epsilon>0$ is fixed but arbitrary.
This gives a slightly stronger version of Theorem~\ref{thm:main}:  There is
a positive constant $C$ such that
\begin{equation}
\label{eq:stronger}
D(n)>1-\frac{C}{(\log\log n)^{1-\frac12{\rm e}\log2}\sqrt{\log\log\log n}}
\hbox{ for infinitely many }n.
\end{equation}
The details are presented in a sequel paper~\cite{KLP}, where the principal result
is that~\eqref{eq:stronger}, apart from the constant $C$, is best possible.
\end{rem}

\begin{rem}
\label{rem:upper}
For a set $\S$ of positive integers, let $\S(x)=\S\cap[1,x]$.
If $\S$ is product-free with least member $a$, then
its upper asymptotic density, defined as
$$
\overline{d}(\S) := \limsup_{x \to \infty} \frac{1}{x} |\S(x)|,
$$
satisfies $\overline{d}(\S)  \le 1-\frac{1}{2a}$.
To see this, suppose $x \ge a$ is arbitrary.  Since $\S(x)\setminus \S(x/a)$
lies in $(x/a,x]$, we have $|\S(x)|- |\S(x/a)|\le x-\lfloor x/a\rfloor$.
Also, multiplying each member of $\S(x/a)$ by $a$ creates products in $[1,x]$ which
cannot lie in $\S$, so we have $|\S(x)|\le x- |\S(x/a)|$.  Adding these two
inequalities leads to $|\S(x)|\le x-\frac12\lfloor x/a\rfloor$, 
which proves the assertion.
\end{rem}


\section{Generalizations}

If $k,j$ are positive integers, we say a set of integers (or residue classes in $\Z/n\Z$)
is $(k,j)$-product-free if there is no solution to $a_1a_2\dots a_k=b_1b_2\dots b_j$
with all $k+j$ letters being elements of the set.  If $k=j$ then only the empty
set is $(k,j)$-product-free.  Indeed, if $a$ is an element of the set, the equation
$a^k=a^k$ shows that we cannot avoid $a_1a_2\dots a_k=b_1b_2\dots b_j$.  Thus we restrict to cases where
 $k\ne j$, and we may as well assume that $k>j$.  The case of $k=2,j=1$
is the unadorned definition of product-free that was considered in the last section.
In this section we record the following simple generalization.


\begin{theorem}
\label{thm:general}
For each real number $\epsilon>0$ and integer $m \ge 3$
there is a positive integer $n$ 
and a subset $\S$ of $\Z/n\Z$ of
cardinality at least $(1-\epsilon)n$ that is simultaneously
$(k, j)$-product-free for all positive integers $k > j$ with $k+j \le m$.
\end{theorem}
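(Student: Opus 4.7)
The plan is to follow the blueprint of Theorem~\ref{thm:main}. Because $\Omega$ is completely additive, any set of integers with $\Omega$-values confined to a window $(\alpha\log\log x,\beta\log\log x)$ is automatically $(k,j)$-product-free whenever $k>j$ and $k/j\ge\beta/\alpha$: a relation $d_1\cdots d_k=e_1\cdots e_j$ inside such a set would give $k\alpha\log\log x<\sum\Omega(d_i)=\sum\Omega(e_i)<j\beta\log\log x$, contradicting $k\alpha\ge j\beta$. Among pairs with $k>j$ and $k+j\le m$ the smallest ratio $k/j$ is $1+1/j_0$, where $j_0:=\lfloor(m-1)/2\rfloor$. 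So I would choose $\alpha=1-\eta$ and $\beta=1+\eta$ with $\eta\le 1/(2j_0+1)$ (which forces $\beta/\alpha\le 1+1/j_0$ while keeping $\alpha<1<\beta$), and set
\[
\D_x:=\{\,d\mid\ell_x\,:\,\alpha\log\log x<\Omega(d)<\beta\log\log x\,\}.
\]
Then $\D_x$ is simultaneously $(k,j)$-product-free in $\Z$ for every admissible pair.

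Next I would upgrade Lemma~\ref{lem:divisors} to the $(k,j)$-setting as follows: if $\D$ is a set of divisors of $n/\rad(n)$ that is $(k,j)$-product-free in $\Z$, and if every product of at most $k$ elements of $\D$ also divides $n/\rad(n)$, then $\S_\D$ is $(k,j)$-product-free in $\Z/n\Z$. The proof copies the original: from a hypothetical $s_1\cdots s_k\equiv t_1\cdots t_j\pmod n$, writing $d_i=\gcd(s_i,n)$, $e_i=\gcd(t_i,n)\in\D$ gives $\gcd(d_1\cdots d_k,n)=\gcd(e_1\cdots e_j,n)$; since both products divide $n/\rad(n)$ (and hence $n$), each equals its own gcd with $n$, so $d_1\cdots d_k=e_1\cdots e_j$ in $\Z$, contradicting the $(k,j)$-product-freeness of $\D$.

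To apply the upgraded lemma uniformly to all pairs with $k+j\le m$ (so up to $k=m-1$ factors), I need products of $m-1$ elements of $\D_x$ to divide $n_x/\rad(n_x)$. I would take $n_x:=\ell_x^m$; then $\rad(n_x)=\prod_{p\le x}p$, and for each prime $p\le x$ the exponent of $p$ in $n_x/\rad(n_x)$ equals $m\lfloor\log_p x\rfloor-1\ge(m-1)\lfloor\log_p x\rfloor$, which comfortably accommodates the $p$-exponent of any product of $m-1$ divisors of $\ell_x$. This substitution of $\ell_x^m$ for Theorem~\ref{thm:main}'s $\ell_x^2$ is the one substantive modification in the construction.

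Finally, the density $|\S_{\D_x}|/n_x=(\varphi(n_x)/n_x)\sum_{d\in\D_x}1/d$ is handled exactly as in the proof of Theorem~\ref{thm:main}: Corollary~\ref{cor:hr} applied with $z=\alpha$ and $z=\beta$ bounds the tails $\sum_{\Omega(d)\le\alpha\log\log x}1/d$ and $\sum_{\Omega(d)\ge\beta\log\log x}1/d$ by $O((\log x)^{1-Q(\alpha)})$ and $O((\log x)^{1-Q(\beta)})$, both $o(\log x)$ since $\alpha,\beta\ne 1$ gives $Q(\alpha),Q(\beta)>0$; meanwhile the main term $(\varphi(n_x)/n_x)\sigma(\ell_x)/\ell_x=\prod_{p\le x}(1-p^{-\lfloor\log_p x\rfloor-1})\ge 1-\pi(x)/x$ tends to $1$. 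Hence $|\S_{\D_x}|/n_x\to 1$ as $x\to\infty$, and choosing $x$ sufficiently large yields $|\S_{\D_x}|\ge(1-\epsilon)n_x$. The main obstacle is formulating the upgraded Lemma~\ref{lem:divisors} with its divisibility side-condition, which in turn dictates the choice $n_x=\ell_x^m$; beyond this adjustment, the argument is a direct transcription of the proof of Theorem~\ref{thm:main}.
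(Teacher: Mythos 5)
Your proposal is correct and follows essentially the same route as the paper: the same choice $n_x=\ell_x^m$, the same window for $\Omega(d)$ (the paper takes width $1/m$, which satisfies your condition $\eta\le 1/(2j_0+1)$), and the same density computation via Corollary~\ref{cor:hr} and Mertens. The only difference is that you explicitly state and prove the $(k,j)$-analogue of Lemma~\ref{lem:divisors} with its divisibility side-condition, a step the paper dispatches with ``(cf.\ Lemma~\ref{lem:divisors})''; your version of that transfer argument is correct.
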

\begin{proof}
As in the proof of Theorem~\ref{thm:main}, let $\ell_x$ denote the least common
multiple of the integers in $[1,x]$, but now we let $n_x=\ell_x^m$, and
$$
\D_x=\left\{d\mid\ell_x\,:\,\left(1-\frac1m\right)\log\log x<\Omega(d)<
\left(1+\frac1m\right)\log\log x\right\}.
$$
Let $k>j$ be positive integers with $k+j\le m$.
If $d_1,\dots,d_k\in\D_x$ and also $d_1',\dots,d_j'\in\D_x$, it is easy to see that
$d=d_1\dots d_k$ and $d'=d_1'\dots d_j'$ are divisors of $n_x$.  In
addition, $d\ne d'$, since
$\Omega(d)>k(1-\frac1m)\log\log x \geq j(1+\frac1m)\log\log x>\Omega(d')$.  
Thus,
$\D_x$ is $(k,j)$-product-free as is the set $\S_{\D_x}$ (cf.\ Lemma~\ref{lem:divisors}).  
As in the proof of
Theorem~\ref{thm:main} it suffices to show that for each $\epsilon>0$,
$$
\frac{\varphi(n_x)}{n_x}\sum_{d\in\D_x}\frac1d\ge1-\epsilon
$$
for all sufficiently large $x$ depending on $\epsilon$.  Already from the proof of
Theorem~\ref{thm:main}, we have
$$
\frac{\varphi(n_x)}{n_x}\sum_{d\mid\ell_x}\frac 1d\ge1-\frac{\pi(x)}{x}\sim1
$$
as $x\to\infty$.
Since $\varphi(n_x)/n_x\sim1/(e^\gamma\log x)$ as $x\to\infty$,
it suffices to show that
\begin{equation}
\label{eq:suff}
\sum_{\substack{d\mid\ell_x\\d\not\in\D_x}}\frac1d=o(\log x)~\hbox{ as }x\to\infty.
\end{equation}
Letting $\delta_1=Q(1-1/m)$ and $\delta_2=Q(1+1/m)$, we have
$\delta_1,\delta_2>0$.  Using Corollary~\ref{cor:hr},
$$
\sum_{\substack{d\mid \ell_x\\ \Omega(d)\le\left(1-\frac1m\right)\log\log x}}\frac1d
\le(\log x)^{1-\delta_1/2},\quad
\sum_{\substack{d\mid \ell_x\\ \Omega(d)\ge\left(1+\frac1m\right)\log\log x}}\frac1d
\le(\log x)^{1-\delta_2/2}
$$
for all large $x$.  Thus, we have \eqref{eq:suff}, which completes the proof of the theorem.
\end{proof}

Returning to the case when $k=j$, 
we can redefine the notion of $(k,k)$-product-free to mean
that the equation $a_1a_2\dots a_k=b_1b_2\dots b_k$ 
implies that $\{a_1,a_2,\dots,a_k\}=\{b_1,b_2,\dots,b_k\}$ as multisets.
For example, the primes are $(k,k)$-product-free for every $k$.  This is essentially
a best-possible result, for as shown by Erd\H os~\cite{E38} in 1938, if $\S$ is a subset of
the positive integers which is $(2,2)$-product-free, then the number of members
of $\S$ in $[1,x]$ is $\pi(x)+O(x^{3/4})$.

The equation
$abc=d^2$ was recently considered in~\cite{HSS}, where it was shown (see Corollary~1) that
if $\S$ is a set of integers such that 
$$
abc=d^2~~\,\,\, \mbox{has no solution with} \,~~ a, b, c \in \S,  \,\,\, d~~\mbox{arbitrary},
$$
then the lower asymptotic density of $\S$ is at most $1/2$.
This result was inadvertently misquoted in~\cite{PS}, where it was asserted that such
a result holds with all of $a,b,c,d\in\S$.  In fact, this is false since
Theorem \ref{thm:general}
applied with $(k, j)=(3, 2)$ implies the complementary result that for any
$\epsilon >0$ there exists a set $\S$ of density exceeding  $1- \epsilon$ such that 
\begin{equation}\label{eq:S2}
abc=d^2~~\,\,\,\mbox{has no solution with} \,\,\,~ a, b, c , d\in \S.
\end{equation}
More precisely, it gives:


\begin{cor}
\label{cor:general}
For each real number $\epsilon>0$,
there is a positive integer $n$ and a  subset $\S$ of $\Z/n\Z$ of
cardinality at least $(1-\epsilon)n$  such that $abc=d^2$
has no solution with $a, b, c, d \in \S$.
\end{cor}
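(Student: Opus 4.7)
The plan is to deduce the corollary directly from Theorem~\ref{thm:general} by noting that a $(3,2)$-product-free set automatically has no solution to $abc = d^2$ with all four variables in the set. Specifically, the equation $abc = d^2$ can be rewritten as $a_1 a_2 a_3 = b_1 b_2$, where we take $a_1 = a$, $a_2 = b$, $a_3 = c$, and $b_1 = b_2 = d$. So any solution with $a, b, c, d \in \S$ is in particular a $(3,2)$-product-free violation in $\S$ (the definition allows repetitions on either side).

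Given $\epsilon > 0$, I would apply Theorem~\ref{thm:general} with $m = 5$, which is the smallest value of $m$ for which the constraint $k + j \le m$ permits $(k, j) = (3, 2)$. This produces a positive integer $n$ and a subset $\S \subseteq \Z/n\Z$ of cardinality at least $(1-\epsilon)n$ that is $(k,j)$-product-free for every pair $k > j$ with $k + j \le 5$; in particular it is $(3,2)$-product-free. By the observation above, $\S$ contains no solution to $abc = d^2$ with $a, b, c, d \in \S$, which is exactly what is required.

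There is no real obstacle here: the entire substance is already contained in Theorem~\ref{thm:general}. The only point worth double-checking is that the definition of $(k,j)$-product-free really does allow repeated entries among the $a_i$'s and $b_i$'s, so that setting $b_1 = b_2 = d$ is permissible; this is clear from the formulation at the start of Section~3, where the set is forbidden from containing any solution to $a_1 \cdots a_k = b_1 \cdots b_j$ with all letters in the set, with no distinctness requirement. Thus the proof is essentially a one-line specialization of Theorem~\ref{thm:general}.
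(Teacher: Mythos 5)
Your proof is correct and matches the paper's own derivation: the paper likewise obtains the corollary by applying Theorem~\ref{thm:general} with $(k,j)=(3,2)$ (so any $m\ge 5$), using exactly the observation that $abc=d^2$ is the instance $a_1a_2a_3=b_1b_2$ with $b_1=b_2=d$ and that the definition of $(k,j)$-product-free permits repeated letters.
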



\section{A numerical example}

In this section we give the details for 
a number $N$ for which there exists a product-free subset of $\Z/N\Z$
of size larger than $N/2$.  Our example is very large; it would be of
interest to see if a substantially smaller number could be found.

Let $\PP$ denote the set of the first $10{,}000{,}000$ primes and let $Q$ be their
product.  For each positive
integer $j$, let
$$
\sigma_j=\sum_{p\in\PP}\frac{1}{p^j},\quad 
S_j=\sum_{\substack{\rad(m)\mid Q\\ \Omega(m)=j}}\frac1m.
$$
We have computed these sums for $j$ up to 13, finding that to 6
decimal places,
$$
\begin{array}{rrrr}
\sigma_1=3.206219,&\sigma_2=0.452247,&\sigma_3=0.174763,&\sigma_4=0.076993,\\
\sigma_5=0.035755,&\sigma_6=0.017070,&\sigma_7=0.008284,&\sigma_8=0.004061,\\
\sigma_9=0.002004,&\sigma_{10}=0.000994,&\sigma_{11}=0.000494,&\sigma_{12}=0.000246,\\
\sigma_{13}=0.000123~&&&
\end{array}
$$
and
$$
\begin{array}{rrrr}
S_1=3.206219,&S_2=5.366043,&S_3=6.276492,&S_4=5.796977,\\
S_5=4.529060,&S_6=3.130763,&S_7=1.976769,&S_8=1.167289,\\
S_9=0.656256,&S_{10}=0.356061,&S_{11}=0.188345,&S_{12}=0.097866,\\
S_{13}=0.050226.&&&
\end{array}
$$
Concerning these calculations, we note that the computation for $\sigma_1=S_1$
is the most time consuming.  The other values of $\sigma_j$ represent the starts
of rapidly converging series, and in fact these values can be found on the web
as values of the ``prime zeta function.''  The remaining values of $S_j$ are
easily computed by a hand calculator using the identity
$$
S_k=\frac1k\sum_{j=1}^k\sigma_jS_{k-j},
$$
where by convention we take $S_0=1$ (see~\cite[page 23, (2.11)]{Mac}).

Let 
$$
N=Q^{14}=\prod_{p\in\PP}p^{14}
$$
and let
$$
\D=\{d\mid N\,:\, 3\le \Omega(d)\le5~\hbox{ or }~11\le\Omega(d)\le13\}.
$$
A moment's reflection shows that $\D$ is product-free and that each member of $\D$ divides
$N/\rad(N)$, and so from Lemma~\ref{lem:divisors},
$$
\S_\D=\{m\bmod N:\gcd(m,N)\in \D\}
$$
is also product-free.
Further,
\begin{equation}
\label{eq:Ssize}
\frac{|\S_\D|}{N}=\frac{\varphi(N)}{N}\sum_{d\in\D}\frac1d.
\end{equation}
We may compute $\varphi(N)/N$ using $\sigma_1$ and $\sigma_2$ as follows:
$$
\log\frac{\varphi(N)}{N} =\sum_{p\in\PP}\log\Big(1-\frac1p\Big)= -\sigma_1-\frac12\sigma_2
+\sum_{p\in\PP}\left(\frac1p+\frac1{2p^2}+\log\Big(1-\frac1p\Big)\right).
$$
The remaining sum above is the start of a rapidly converging series,
so we easily find that
\begin{equation}
\label{eq:phiest}
\frac{\varphi(N)}{N}>0.029542.
\end{equation}
The sum in \eqref{eq:Ssize} is
$$
\sum_{d\in\D}\frac1d=S_3+S_4+S_5+S_{11}+S_{12}+S_{13} =16.938967.
$$
Thus, with \eqref{eq:Ssize} and \eqref{eq:phiest}, we have
$$
\frac{|\S_\D|}{N}>(0.029542)(16.9389)>0.5004.
$$

This number $N$ is very large, it is about $10^{1.09\times10^9}$.
However, it is 
possible to reduce the exponents somewhat for the larger primes in $N$.  Let $N'$ be $N$
divided by the 12th power of each prime dividing $N$ that is above $10^6$.
Then
$D(N')>0.5003 N'$ and $N'$ is about $10^{1.61\times10^8}$.  
We have made some effort
at finding a smaller example, say below $10^{10^8}$, but we were not successful.

\section{Densities and further problems}\label{sec5}

Let $u\in[0,1)$ be a real number and, as in the introduction, let
$\NN(u)$ denote the set of natural numbers $n$ with $D(n)>u$.
  Since $D(mn)\ge D(n)$,
it follows that if $n\in\NN(u)$, so too is every multiple of $n$. 
Consequently
$\NN(u)$ has a logarithmic density, see~\cite{DE1,DE2}, denote this by $\delta(u)$.
We have by Corollary~\ref{cor:main} that $\delta(u)>0$ for all $u\in[0,1)$.  We can
say a bit more.

\begin{prop}
\label{prop7}
We have $\liminf_{n\to\infty}D(n)=1/2$. Consequently for
$0 \le u < \frac{1}{2}$ the set $\NN(u)$ has both a  logarithmic
density $\delta(u)$  and a natural density $d(u)$ satisfying 
$d(u)= \delta(u)=1$.
\end{prop}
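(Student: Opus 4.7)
The plan is to prove $\liminf_{n\to\infty} D(n) \ge 1/2$ by an explicit prime-power construction; the reverse inequality and the density assertions then both follow almost for free.

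For the upper bound $\liminf D(n)\le 1/2$: for any odd prime $p$, the non-identity coset of the index-two subgroup of $(\Z/p\Z)^*$ (i.e., the quadratic non-residues mod $p$) is a product-free set of cardinality $(p-1)/2$, so $D(p)\ge (p-1)/(2p)\to 1/2$ as $p\to\infty$. Combined with the Pomerance--Schinzel bound \cite{PS} (which gives $D(p)<1/2$, as $p$ is squarefree), one has $D(p)\to 1/2$ along primes, hence $\liminf D(n)\le 1/2$.

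The heart of the proof is the lower bound, for which the key ingredient is the construction, for each prime power $p^a$, of a product-free subset of $\Z/p^a\Z$ of density at least $1/2-C/p^a$ with $C$ absolute. For odd $p$, fix a quadratic non-residue mod $p$, let $G\subset(\Z/p\Z)^*$ be the QNR coset, and set
\begin{equation*}
\S_{p^a}\;=\;\bigcup_{v=0}^{a-1}\bigl\{x\in\Z/p^a\Z\,:\,v_p(x)=v\text{ and }(x/p^v)\bmod p\in G\bigr\}.
\end{equation*}
If $x=p^v c$ and $y=p^{v'}c'$ lie in $\S_{p^a}$, then $xy=p^{v+v'}cc'$ with $cc'\bmod p$ in the QR subgroup, hence not in $G$; so either $v+v'\ge a$ and $xy=0$, or $v_p(xy)=v+v'$ but $(xy/p^{v+v'})\bmod p\notin G$. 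Either way $xy\notin\S_{p^a}$, so $\S_{p^a}$ is product-free, with $|\S_{p^a}|=\tfrac{p-1}{2}\sum_{v=0}^{a-1}p^{a-v-1}=(p^a-1)/2$, giving density $1/2-1/(2p^a)$. For $p=2$ and $a\ge 3$ the same scheme works using $G=\{3,5\}\bmod 8$, since $\{3,5\}\cdot\{3,5\}\subset\{1,7\}\bmod 8$; the resulting subset has size $2^{a-1}-2$, hence density $1/2-O(2^{-a})$.

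To pass from prime powers to general $n$ I would invoke the monotonicity $D(n)\ge D(d)$ for every divisor $d\mid n$ (a consequence of $D(mn)\ge D(n)$ from the introduction), applied to $d=L(n)$, where $L(n)$ is the largest prime-power divisor of $n$. Since $L(n)\le M$ forces every prime factor of $n$ to be $\le M$ and every prime-power factor to be $\le M$, it forces $n\le M^{\pi(M)}$; this shows $L(n)\to\infty$ with $n$ (in fact $L(n)\gtrsim\log n$). Thus $D(n)\ge D(L(n))\ge 1/2-C/L(n)\to 1/2$, giving $\liminf D(n)\ge 1/2$. The density conclusion is immediate: given $u<1/2$, taking $\epsilon=1/2-u$ produces an $N_0$ with $D(n)>u$ for all $n\ge N_0$, so $\NN(u)$ is cofinite, and both $d(u)$ and $\delta(u)$ equal $1$. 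The main obstacle, I expect, is verifying product-freeness in the $p=2$ construction, where reduction mod $p$ alone is inadequate and one must carefully check the $\{3,5\}\bmod 8$ coset structure across all valuations.
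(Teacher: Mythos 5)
Your proposal is correct and follows essentially the same route as the paper: a product-free subset of $\Z/p^a\Z$ built from powers of $p$ times quadratic nonresidues (with a mod-$8$ variant at $p=2$ where the paper uses residues $3\bmod 4$), the monotonicity $D(n)\ge D(q)$ for prime powers $q\mid n$ to force $\liminf D(n)\ge 1/2$, the Pomerance--Schinzel bound for the reverse inequality, and cofiniteness of $\NN(u)$ for $u<1/2$. The only cosmetic differences are that you make the passage to general $n$ quantitative via the largest prime-power divisor $L(n)\gtrsim\log n$ where the paper argues by finiteness of the exceptional set of prime powers, and a harmless index slip in your $p=2$ union, which must stop at $v=a-3$ for the mod-$8$ condition to be defined (as your count $2^{a-1}-2$ already reflects).
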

\begin{proof}
Let $p$ be an odd prime and let $a$ be a positive integer.  The set of nonzero
 residues mod~$p^a$
which are the product of a power of $p$ and a quadratic nonresidue mod~$p$ is product-free,
and this shows
that $D(p^a)\to\frac12$ as $a\to\infty$ (recall that
$D(n)<1/2$ if $n/\rad(n)$ does not have at least 6 distinct prime factors).
In addition, the set of nonzero 
residues mod $2^a$ which are the product of a power of 2 and an integer that is 3~mod~4 is
product-free, so that $D(2^a)\to\frac12$ as $a\to\infty$.  Since $D(p)\to\frac12$ as $p\to\infty$
through the primes, it follows that $D(q)\to\frac12$ as $q\to\infty$ through the prime powers
(which include the primes).
Hence for each real number $\epsilon>0$, there are at most finitely many
prime powers $q$ with $D(q)\le\frac12-\epsilon$.  Thus, if $D(n)\le \frac12-\epsilon$, it
follows that each prime power dividing $n$ must come from this set, forcing the set of such
$n$ to be finite as well.  This proves the first statement in the proposition.  
Let $u\in[0,1/2)$.  By what we just proved, the set $\NN(u)$ consists of
all but finitely many natural numbers.  This establishes 
the second statement in the proposition.
\end{proof}

It follows from the principal results of \cite{PS} that $\delta(1/2)\le1.56\times10^{-8}$,
and so with Proposition~\ref{prop7} it follows that $\delta(u)$ is not continuous in the
variable $u$ at $1/2$.  From the numerical example in the last section, we have 
$\delta(1/2)>10^{-1.62\times10^8}$.  
There is of course an enormous (multiplicative) gap between these
two bounds for $\delta(1/2)$.

More generally Theorem \ref{thm:main} yields a lower bound for 
$\delta(u)$ as $u \to 1^-$.  Setting 
$\alpha_0 := (1- \frac{1}{2}\e \log 2)^{-1} \approx 17.26659$,
we have
\begin{equation}
\label{eq:logbd}
\delta(u) > 1/\exp\exp\left((C/(1-u))^{\alpha_0}\right).
\end{equation}
Note that \eqref{eq:stronger} allows a slight improvement in this estimate.

It seems likely that for each $u$, the set $\NN(u)$ has an asymptotic density
$d(\NN(u))$. General facts about  asymptotic  densities give 
$\underline{d}(\NN(u)) \le \delta(u) \le \overline{d}(\NN(u)),$
and a natural density $d(u) = \delta(u)$ exists 
for those values with $\underline{d}(\NN(u))= \overline{d}(\NN(u)).$
Our proofs show that $\underline{d}(\NN(u)) >0$ for $0 < u < 1$
and $ \overline{d}(\NN(u))< 1$ for $u \ge \frac{1}{2}.$

As asked in \cite{PS},
is it true that for $u\ge1/2$, the ``primitive'' members of $\NN(u)$ (namely, they
are not divisible by any other member of $\NN(u)$) are all squarefull?  If so,
then it would follow that the asymptotic density of $\NN(u)$ exists for each value of $u$.

\subsection*{Acknowledgments}
We thank Rosa Orellana for a helpful discussion concerning~\cite{Mac}.
Part of this work was done while the three authors visited MSRI,
as part of the semester program Arithmetic Statistics.
They thank MSRI for support, funded through the NSF.
The first author was supported in part by grants from
the G\"oran Gustafsson Foundation,
  the Knut and Alice Wallenberg foundation, and the Swedish Research Council.
The second author was supported in
part by NSF grant DMS-0801029.  The third author was supported in
part by NSF grant DMS-1001180.

\end{document}